\documentclass[letterpaper, 10 pt, conference]{ieeeconf}  
\usepackage[T1]{fontenc}
\usepackage[utf8]{inputenc}
\usepackage{amssymb}
\usepackage{amsmath}
\usepackage{mathtools}
\usepackage{algorithm}
\usepackage{algorithmic}
\usepackage[hidelinks]{hyperref}
\usepackage[capitalize]{cleveref}
\usepackage{comment}

\usepackage{amsthm}
\usepackage{graphicx}
\usepackage{wrapfig}
\usepackage{capt-of} 
\graphicspath{{results/}}
\usepackage{xcolor}
\usepackage{mdframed}
\usepackage{cuted}
\usepackage{booktabs}
\usepackage{tikz}
\usepackage{pgfplots}
\pgfplotsset{compat=1.18}
\usepgfplotslibrary{groupplots}

\mdfdefinestyle{theorembox}{%
  backgroundcolor=blue!10,
  linecolor=blue!10,
  linewidth=0pt,
  roundcorner=2pt,
  innertopmargin=0.3\baselineskip,
  innerbottommargin=0.3\baselineskip,
  innerleftmargin=0.5em,
  innerrightmargin=0.5em,
  skipabove=0.8\baselineskip,
  skipbelow=0.8\baselineskip,
}
\surroundwithmdframed[style=theorembox]{theorem}

\newmdenv[style=theorembox]{contribbox}

\newtheorem{theorem}{Theorem}
\newtheorem{proposition}[theorem]{Proposition}
\newtheorem{remark}{Remark}
\newtheorem{lemma}[theorem]{Lemma}

\theoremstyle{definition}
\newtheorem{definition}{Definition}
\newtheorem{example}{Example}

\newcommand{\reals}{\mathbb{R}}
\newcommand{\naturals}{\mathbb{N}}
\newcommand{\norm}[1]{\Vert #1 \Vert}
\newcommand{\abs}[1]{| #1 |}

\IEEEoverridecommandlockouts                              

\title{\LARGE \bf
Strategically Robust Linear Quadratic Dynamic Games
}

\author{Boris Velasevic, Nicolas Lanzetti, and Eric Mazumdar
\thanks{All authors are with the Department of Computing and Mathematical Sciences, California Institute of Technology, Pasadena, USA, 
        {\tt\small \{bvelasev,lnicolas,mazumdar\}@caltech.edu}.}%
}

\begin{document}

\maketitle
\thispagestyle{empty}
\pagestyle{empty}

\begin{abstract}
We study linear quadratic dynamic games where players are uncertain about each other's control policies or goals and consequently seek to be \emph{strategically} robust. 
Building on recent work on strategically robust and risk-averse game theory, we first formalize the problem of strategically robust linear quadratic dynamic games. We show that these can be rewritten as simple transformations of linear quadratic games in which each player chooses a controller in a fictitious game in which they are faced with an adversary who is penalized for deviating from the other players' policies. This formulation naturally induces a novel notion of dynamic equilibrium, which we call a strategically robust dynamic equilibrium. We establish existence and uniqueness of such equilibria and furthermore show that the equilibrium policies are Markovian, linear, and can be efficiently computed via coupled backward Riccati equations. Through numerical simulations, including experiments in a network game, we illustrate the benefits of strategic robustness in designing robust and resilient decentralized control schemes.
Our experiments also expose a ``free-lunch'' phenomenon in games in which robustness does not incur
a corresponding loss in performance but can yield improvements in players’ utilities and social welfare.
\end{abstract}

\section{Introduction}

Linear quadratic dynamic games are natural models for multi-agent decision-making problems in a wide variety of control applications, including robotics~\cite{fridovich2020efficient}, aerospace~\cite{chai2020linear}, and economics~\cite{pindyck2003optimal}, to name a few. 
In these games, the state of a linear dynamical system is controlled by a set of agents that select state-feedback policies to minimize their personal control cost, which depends quadratically on the state of the system and on the agents' inputs. 
Since the agents' costs are private and do not necessarily align, this setting naturally leads to a game-theoretic formulation; see~\cite{bacsar1998dynamic}.

Across many applications of dynamic games agents inevitably are confronted with \emph{strategic} uncertainty regarding the feedback policies of the other players and, thus, about their control inputs. 
For instance, in robotics, an agent might fear that other robots may deviate from the Nash equilibrium protocol, e.g., due to limited computation, misspecified control objective, or irrational behavior.
Should this happen, the Nash equilibrium policy could be highly suboptimal and result in high costs. For instance, \cite{nagel1995unraveling} demonstrates that strictly adhering to a standard Nash equilibrium when opponents are non-rational can lead to significantly suboptimal outcomes. This phenomenon can be exacerbated when the game is dynamic and the effect of decisions accumulates over time---as we demonstrate in our running example and across our simulations. 
It is thus natural for agents to hedge against strategic uncertainties.

Compared to standard uncertainty in control ``à la'' robust control~\cite{bacsar2008h}, strategic uncertainty cannot be simply modeled as an exogenous disturbance affecting the system as in e.g.,~\cite{amato2002guaranteeing,jimenez2006varepsilon,pantazis2023data}.
Indeed, strategic uncertainty is \emph{endogenous}: With their decisions, agents affect the other agents' decisions and, thus, the associated uncertainty. 
Consequently, strategic uncertainty must be handled in a ``game-theoretic'' fashion. 

To protect against strategic uncertainties, we take inspiration from recent work on strategic risk aversion in multi-agent reinforcement learning~\cite{mazumdar2025tractable} and strategically robust equilibria in one-shot games~\cite{lanzetti2025strategically} from which we also inherit the name ``strategic robustness''.

Specifically, we propose that agents choose their strategy not by playing against the policies of the other agents, but instead by reasoning over the strategy of a fictitious adversary. This adversary seeks to inflict maximum damage but is penalized for deviating too much from the policy of the other agents. When this penalty is chosen to be sufficiently large, this fictitious agent will be forced to select precisely the policy of the other agents, thereby recovering the standard Nash equilibrium. When, instead, this penalty is decreased, the fictitious agent gains power and can ultimately drive the control to $+\infty$---a phenomenon reminiscent of standard robust control. 
Thus, with this penalty, agents can directly tune their level of strategic robustness.

This paper instantiates this model in the setting of finite-horizon discrete-time linear quadratic dynamic games, whereby the dynamics associated with the state of the underlying system are linear and each agent's cost is quadratic. For this class of games, our contributions are as follows: 

\vspace{-1ex}\begin{contribbox}
\textbf{Contributions.}
We introduce the concept of strategic robustness in  linear quadratic dynamic games as a key feature which allows agents to hedge against strategic uncertainty i.e., uncertainty over the behavior of the other agents. We show that this gives rise to a new equilibrium concept in such games: \emph{strategically robust dynamic equilibria}.
Under mild conditions, we establish existence and uniqueness of these equilibria, and show that they can be computed efficiently via coupled Riccati equations. 

Through numerical examples, we demonstrate that strategic robustness protects agents against misspecified policies and adversarial perturbations to other agents' controllers. Our experiments also expose a "free-lunch" phenomenon in games in which robustness does not incur a corresponding loss in performance but can actually yield strict improvements in players' utilities and social welfare. Altogether our results open the door to new directions in robust dynamic game theory.
\end{contribbox}

\noindent \textbf{Related work:} Unlike strategically robust equilibria~\cite{lanzetti2025strategically}, introduced in static games, we operate in a dynamic setting and thus seek feedback policies instead of static decisions. 
Also, our work does not consider mixed policies, but deterministic state-feedback policies, and therefore does not require the language of probability distributions, and relaxes the hard-constrained ambiguity set to a penalty.
Compared to risk-averse quantal equilibria~\cite{mazumdar2025tractable,zhang2026provably,zhang2025convergent}, we do not include bounded rationality and consider a standard control-theoretic setting with continuous state and action spaces. Our solution strategy, based on an increase in the number of players, is, nonetheless, inspired by that line of work.
Moreover, similarly to~\cite{lanzetti2025strategically} and~\cite{qu2026training}, we also observe empirically that strategic risk aversion and strategic robustness not only ensure protection, but can also coordinate agents and make agents more collaborative. 
Finally, close to our work,~\cite{rabbani2025optimal} studies the effect of input disturbance in two-player linear quadratic games. The work takes the perspective of player 1 and robustifies their control cost against so-called input disturbances of player 2. Unlike our work, robustness is therefore not strategic, but rather ``on top of'' the standard Nash equilibrium.

\section{Strategically robust dynamic equilibria}

We consider a discrete-time $N$-player linear quadratic game over a finite horizon $T\in\naturals_{>0}$.
The system state $x_t \in \mathbb{R}^n$ evolves according to
\begin{equation}\label{eq:system_dynamics}
    x_{t+1} = A_t x_t + \sum_{j=1}^N B_t^j u_t^j,
\end{equation}
where $u_t^i \in \mathbb{R}^{d_i}$ is the control input of player $i$, $A_t$ and $B_t^i$ are matrices of appropriate dimensions, and $x_0\in\reals^n$ is a given initial condition. 
In the standard setting of Nash equilibria, each player selects a state-feedback policy $\pi_t^i(\cdot)$ from the set of policies $\Pi^i$ (i.e., measurable functions from the past states to $\reals^{d_i}$), to minimize the cost functional 
\begin{equation}\label{eq:cost}
    \norm{x_T}^2_{Q_T^i}
    +
    \sum_{t=0}^{T-1} \norm{x_t}^2_{Q_t^i} + \norm{u_t^i}^2_{R_t^i} - \norm{u_t^{-i}}^2_{S_t^i}
\end{equation}
subject to the system dynamics~\eqref{eq:system_dynamics} and $u_t^i=\pi_t^i(x_0,\ldots,x_t)$. Here, $Q_t^i \succeq 0, R_t^i \succ 0$, and $S_t^i \succeq 0$ are cost matrices of appropriate dimensions, and we use the shorthand notation $\Vert x\Vert^2_Q=x^\top Qx$.
The term $\Vert u_t^{-i}\Vert^2_{S_t^i}$ cannot be influenced by player $i$, so it will not affect the Nash equilibrium policies. Nevertheless, we include it for generality.

Unfortunately, Nash equilibria are brittle, and their performance can quickly deteriorate when the other players deviate from Nash equilibrium feedback policies---a feature concerning in practice, as we show next. 

\begin{example}[motivating]\label{example:motivating}
    Consider a two-player setting with scalar dynamics $x_{t+1} = 1.05 x_t + u^1_t + u^2_t$ and control costs $\sum_{t = 0}^{T - 1} (u^i_t)^2 + Q_T^i x_T^2$.
    Under the Nash equilibrium policies, the state evolves as $x_{t + 1} \approx 0.98x_t$ for the majority of the horizon. As the coefficient is very close to 1, small deviations in agents' policies can drastically impact the cost.
\end{example}

The fragility of Nash equilibria prompts players to seek protection against misspecified policies of the other players. 
Thus, we consider the strategically robust control cost  
\begin{multline}\label{eq:robust_cost}
    J^i(\pi^i,\pi^{-i}) = \max_{\sigma^i\in\Sigma^i} \norm{x_T}^2_{Q^i_T} + 
    \\
    \sum_{t=0}^{T-1} \norm{x_t}^2_{Q_t^i} + \norm{u_t^i}^2_{R_t^i}
    - \norm{d_t^i}^2_{S_t^i}
    - \norm{d_t^{i}-u_t^{-i}}^2_{M_t^i}
\end{multline}
subject to the ``worst-case'' system dynamics 
\begin{equation*}
    x_{t+1} = A_t x_t + B_t^i u_t^i + B_t^{-i} d_t^i
\end{equation*}
and the feedback policies $u^i = \pi^i_t(x_0,\ldots,x_t)$ for the player $i$, $u_t^{j}=\pi_t^{j}(x_0,\ldots,x_t)$ for all players $j\neq i$, and $d^{i}_t = \sigma^{i}_t(x_0,\ldots,x_t)$ for the fictitious adversary (here, $\Sigma^i$ is the set of policies, i.e., measurable functions from the past states to the input space $\reals^{\sum_{j\neq i} d_j}$ and $B_t^{-i}$ are the horizontally stacked matrices $B_t^j$ for $j\neq i$).
In other words, player $i$ evaluates their control cost against a fictitious adversary that, by selecting a feedback policy $\sigma^i$, aims to maximize this control cost, while not deviating too much from the other players' policies $\pi^{-i}$, as quantified by the quadratic penalty $\norm{d_t^{i}-u_t^{-i}}^2_{M_t^i}$ for $M_t^i\succ 0$.

This strategically robust cost naturally leads to the notion of equilibrium in which players are robust to each other's policies, which can be interpreted as the dynamic extension of strategically robust equilibria introduced in~\cite{lanzetti2025strategically}.

\begin{definition}[strategically robust dynamic equilibrium]
    A tuple of policies $(\bar\pi^{1},\ldots,\bar\pi^N)\in\Pi^1\times\ldots\times\Pi^N$ forms a strategically robust dynamic equilibrium if for all players $i$
    \begin{equation}\label{eq:sre}
        J^i(\bar\pi^i,\bar\pi^{-i})
        \leq 
        J^i(\pi^i,\bar\pi^{-i})
        \qquad 
        \forall \pi^i\in\Pi^i.
    \end{equation}
\end{definition}

\begin{remark}[level of robustness]
The matrix $M_t^i$, often taken as a simple diagonal matrix $m_t I$ for $m_t>0$, specifies the level of robustness. Specifically:
\begin{itemize}
    \item No robustness:
    As $M_t^i \to \infty$, the penalty for deviating is prohibitively large, forcing $d_t^i \to u_t^{-i}$ and so $\sigma^i=\pi^{-i}$. Thus, we recover the standard Nash equilibrium.
    
    \item Robust regime:
    For finite $M_t^i$, the adversary is instead allowed to explore deviations from the nominal strategies $u_t^{-i}$, effectively preparing player $i$ for adversarial or unpredictable behavior. Smaller $M_t^i$ make deviations even cheaper, protecting the player against stronger adversaries and enhancing robustness.  
    
    \item Heterogeneous level of robustness:
    By choosing $M_t^i$ as a block-diagonal matrix, i.e., $M_t^i = \text{diag}(M_t^{i,1}, \dots, M_t^{i,N-1})$, player $i$ can independently parameterize their level of robustness to each opponent $j \neq i$. This allows modeling players that are perceived as more reliable than others.
\end{itemize}
\end{remark}

This game can also be interpreted as a two-stage dynamic game. In the first stage, all players select their policies $\pi_i$. In the second stage, the fictitious players select their adversary policies $\sigma^i$ to maximize the players' control cost, while not deviating too much from the policies $\pi^{-i}$. 
When proving our main result, we will show that this game is equivalent to a game where all players---including the adversaries---select their policies simultaneously. Besides simplifying our proof, this indicates that adversaries have no (dis)advantage in playing after observing the players' strategies.

\section{Existence, uniqueness, and computation of strategically robust dynamic equilibria}

We are now ready to characterize strategically robust dynamic equilibria: In our main result below, we study existence, uniqueness, and computation. 

\begin{theorem}[existence, uniqueness, and computation]\label{thm:sre}
    Consider the backwards Riccati equation 
    \begin{equation}\label{eq:sre_riccati}
    \begin{aligned}
        P^i_t
        &= Q^i_t + K^{i\top}_t R^i_t K^i_t 
        - L^{i\top}_t S^i_t L^i_t
        \\
        &- (L^i_t - K^{-i}_t)^\top M^i_t (L^i_t - K^{-i}_t)
        \\
        &+ (A_t\!+\!B_t^iK_t^i\!+\!B_t^{-i}L_t^i)^\top P^i_{t+1} (A_t\!+\!B_t^iK_t^i\!+\!B_t^{-i}L_t^i), 
    \end{aligned}
    \end{equation}
    initialized with $P^i_T=Q_T^i$, where 
    {\setlength{\arraycolsep}{0.5pt}
    \begin{equation}\label{eq:sre_policies}
    \begin{bmatrix}
        K_t^1 \\ L_t^1 \\ \vdots \\ K_t^N \\ L_t^N
    \end{bmatrix}
    \!\!
    =
    \!\!
    \begin{bmatrix}
        H_{t}^{u^1,u^1} & H_{t}^{u^1,d^1} & \cdots & 0 & 0 \\
        H_{t}^{d^1,u^1} & H_{t}^{d^1,d^1} & \cdots & -M_t^{1,N} & 0 \\
        \vdots          & \vdots          & \vdots  & \vdots          & \vdots          \\
        0 & 0 & \cdots & H_{t}^{u^N,u^N} & H_{t}^{u^N,d^N} \\
        -M_t^{N,1} & 0 & \cdots & H_{t}^{d^N,u^N} & H_{t}^{d^N,d^N}
    \end{bmatrix}^{-1}
    \!\!
    \begin{bmatrix} 
        G_{t}^{u^1} \\ G_{t}^{d^1} \\ \vdots \\ G_{t}^{u^N} \\ G_{t}^{d^N}
    \end{bmatrix}
    \end{equation}
        }
    with the block matrices
    \begin{align*}
        H_t^{u^i,u^i} &\!=\! R_t^i + (B_t^i)^\top P_{t+1}^i B_t^i
        \\
        H_t^{u^i,d^i} &\!=\! (B_t^i)^\top P_{t+1}^i B_t^{-i}
        \\
        H_t^{d^i,d^i} &\!=\! M_t^i + S_t^i - (B_t^{-i})^\top P_{t+1}^i B_t^{-i}
        \\
        H_t^{d^i,u^i} &\!=\! -(B^{-i}_t)^\top P_{t+1}^i B_t^{i}
        \\
        G_t^{u^i} &\!=\! -(B_t^i)^\top P_{t+1}^i A_t
        \\
        G_t^{d^i} &\!=\! (B_t^{-i})^\top P_{t+1}^i A_t.
    \end{align*}
    and where $M_t^{i, j}$ is a sub-matrix of $M_t^i$ corresponding to the columns of the adversary of player $i$ associated with player $j$.
    
    \noindent Suppose that $H_t^{u^i,u^i} \succ 0$ and $H_t^{d^i,d^i} \succ 0$, and that the matrix $H_t$ is invertible at all times $t$, 
    and consider the linear and Markovian feedback policies
    \begin{equation*}
        \bar\pi^i_t(x_t) 
        =
        K_t^i x_t.
    \end{equation*}
    Then, $(\bar\pi^1,\ldots,\bar\pi^N)$ is a strategically robust dynamic equilibrium of the game, and it is the unique subgame perfect strategically robust dynamic equilibrium. Moreover, the optimal adversarial feedback policy in~\eqref{eq:robust_cost} is uniquely given by $\bar\sigma_t^i(x_t)=L_t^ix_t$.
\end{theorem}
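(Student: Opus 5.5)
The plan is to turn the two-stage problem into an ordinary $2N$-player linear quadratic dynamic game and then run the standard backward dynamic programming argument for feedback Nash equilibria.

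\textbf{Step 1: the lifted game.} Introduce $N$ additional players, the adversaries $d^1,\dots,d^N$. Player $u^i$ minimizes the stage cost $\norm{x_t}^2_{Q_t^i}+\norm{u_t^i}^2_{R_t^i}-\norm{d_t^i}^2_{S_t^i}-\norm{d_t^i-u_t^{-i}}^2_{M_t^i}$ plus the terminal cost. Adversary $d^i$ maximizes the same cost. Both are evaluated along the fictitious dynamics $x_{t+1}=A_tx_t+B_t^iu_t^i+B_t^{-i}d_t^i$.

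The coupling runs through $u_t^{-i}$, which enters only the penalty on $d^i$. It does not enter the dynamics of the fictitious system. So in the fictitious system for player $i$, the pair $(u^i,d^i)$ forms a zero-sum game, and the other players' controls act only as a linear shift of the adversary's target.

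The first thing to show is that the inner maximization in~\eqref{eq:robust_cost}, taken for fixed $\pi^{-i}$, is well posed. I would do this by backward induction with value functions $V_t^i(x)=x^\top P_t^i x$, using the concavity hypothesis $H_t^{d^i,d^i}\succ0$. I also need to check that sequential play (first $\pi$, then $\sigma$) gives the same outcome as simultaneous play. The reason this should hold is that, for each fixed $\pi$, the adversary's best response is a unique feedback law obtained from strictly concave one-step problems.

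\textbf{Step 2: one-step stationarity.} Assume $V_{t+1}^i(x)=x^\top P_{t+1}^i x$, and assume that all other agents use linear feedback $u_t^j=K_t^jx$. Write the $t$-stage $Q$-function of player $i$ as a quadratic in $(u^i,d^i)$:
\[
\norm{x}^2_{Q_t^i}+\norm{u^i}^2_{R_t^i}-\norm{d^i}^2_{S_t^i}-\norm{d^i-K_t^{-i}x}^2_{M_t^i}+\norm{A_tx+B_t^iu^i+B_t^{-i}d^i}^2_{P_{t+1}^i}.
\]
This function is strictly convex in $u^i$, because $H_t^{u^i,u^i}\succ0$, and strictly concave in $d^i$, because $H_t^{d^i,d^i}\succ0$.

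Setting the gradients to zero gives
\[
H^{u^i,u^i}_tu^i+H^{u^i,d^i}_td^i=G^{u^i}_tx
\]
and
\[
H^{d^i,u^i}_tu^i+H^{d^i,d^i}_td^i-M_t^iK_t^{-i}x=G^{d^i}_tx .
\]
Move the $M_t^{i,j}K_t^jx$ terms to the left-hand side and impose $u^i=K_t^ix$, $d^i=L_t^ix$. Stacking over $i$ then gives exactly $H_t[K;L]=G_t$, which is~\eqref{eq:sre_policies}. Invertibility of $H_t$ yields a unique solution.

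For a convex–concave quadratic, the stationary point is the saddle point, and the saddle point is unique. Hence $(K_t^i,L_t^i)$ is simultaneously player $i$'s minimax choice and the adversary's unique best response. Substituting the gains back into the $Q$-function gives the quadratic value $x^\top P_t^ix$ with $P_t^i$ as in~\eqref{eq:sre_riccati}, which closes the induction.

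\textbf{Step 3: from Markov linear to general policies.} The remaining claims are equilibrium against arbitrary history-dependent deviations, uniqueness among subgame perfect equilibria, and uniqueness of $\bar\sigma^i$.

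For the equilibrium claim I would use a verification (completion of squares) argument. Fix $\bar\pi^{-i}$ and telescope $V_t^i$ along any trajectory. Because every stage problem has a unique saddle point, deviations of $\pi^i$ cannot lower the max-cost, and deviations of $\sigma^i$ cannot raise it. This holds pathwise, since the system is deterministic and policies may depend on the history.

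Uniqueness under subgame perfection follows by backward induction. At every stage and every state, subgame perfection forces stationarity of the stage problem. By the uniqueness in Step 2, this pins down the actions as $K_t^ix$ and $L_t^ix$.

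I expect the main obstacle to be the exchange between the sequential (max after min) structure of~\eqref{eq:robust_cost} and the simultaneous saddle-point formulation, together with the fact that the Riccati matrices $P_t^i$ need not be sign definite. The convexity hypotheses in the theorem are exactly what this exchange requires. I would first try to prove directly that, for fixed $\pi^i$ (possibly nonlinear), the inner maximum is attained by a feedback law and its value still telescopes against $V_{t+1}^i$. This would allow a min–max to max–min interchange at each stage, justified by strict convexity–concavity.
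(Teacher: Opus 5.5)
Your proposal is correct and follows essentially the paper's route. You lift to the augmented game with one adversary per player, solve the stage-wise strictly convex--concave problems whose stacked first-order conditions give $H_t[K;L]=G_t$, propagate quadratic values through the Riccati recursion, verify, and get uniqueness by backward induction; your completion-of-squares telescoping is a compact substitute for the paper's reduction to a standard LQR with a matching Riccati recursion. The sequential-versus-simultaneous ``obstacle'' you flag is already resolved by that telescoping, since for any $\pi^i$ one has $J^i(\pi^i,\bar\pi^{-i})\ge J^i_{\mathrm{aug}}(\pi^i,\bar\sigma^i,\bar\pi^{-i})\ge x_0^\top P_0^i x_0=J^i(\bar\pi^i,\bar\pi^{-i})$. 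So you need not show that the inner maximum is attained by a feedback law for arbitrary nonlinear $\pi^i$, which in general fails because that maximum can be $+\infty$.
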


\cref{thm:sre}, which we prove in~\cref{thm:proof}, suggests that strategically robust dynamic equilibria, just like standard Nash equilibria, (i) continue to be linear Markovian feedback policies, and (ii) result from (coupled) Riccati equations, so that they are both easy to implement and compute---all of which while additionally ensuring robustness to misspecified policies of the other players.
For a first illustration of strategic robustness, we revisit our motivating example.

\begin{example}[motivating]
    Consider the setup of~\cref{example:motivating} extended to the framework of strategically robust agents. We set the robustness levels to $m^1_t = 1.2$ and $m^2_t \to +\infty$, i.e., player 1 seeks protection against misspecified policies of player 2. 
    Under the resulting strategically robust dynamic equilibrium, for the majority of the horizon, the state evolves according to $x_{t + 1} \approx 0.464 x_t$, remaining stable even under perturbations in the feedback policy of player 2. 
\end{example}

To conclude this section, we comment on the assumptions of~\cref{thm:sre}.
A glimpse of the proof of~\cref{thm:sre} suggests that the condition $H_t^{u^i,u^i} \succ 0$ (resp. $H_t^{d^i,d^i} \succ 0$) merely ensures that the player $i$ (resp. their adversary) does not carry the power to drive the control cost to $-\infty$ (resp $+\infty$). 
The presence of an assumption that couples the players' parameters---namely, invertibility of $H_t$---is not surprising and appears also in standard linear quadratic games; e.g., see~\cite[Corollary 1]{bacsar1998dynamic}.
In our case, strategic robustness allows us to derive a sufficient condition for invertibility of $H_t$ that decomposes across players:

\begin{proposition}[invertibility of $H_t$]\label{prop:invertibility}
    Suppose that the following spectral dominance condition holds for all $i \in \mathcal{N}$: 
    \begin{align*}
        \min\{ &\lambda_{\min}(R_t^i + (B_t^{i})^{\top} P_{t + 1}^i B_t^i), \\ &\lambda_{\min}(S_t^i + M_t^i - (B_t^{-i})^\top P_{t + 1}^i B_t^{-i})\} \\ 
        & \quad > \frac{1}{2} \sum_{j \neq i} \max\{\sigma_{\max}({M_t^{i,j}}), \sigma_{\max}({M_t^{j,i}})\}.
    \end{align*}
    Then, $H_t$ is invertible. In particular, if $M_t^i =\lambda I>0$, then 
    \begin{align*}
        \min\{ &\lambda_{\min}(R_t^i + (B_t^{i})^{\top} P_{t + 1}^i B_t^i), \\ &\lambda_{\min}(S_t^i + \lambda I - (B_t^{-i})^{\top} P_{t + 1}^i B_t^{-i})\}
        > \frac{N-1}{2} \lambda.
    \end{align*}
\end{proposition}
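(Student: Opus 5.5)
Our plan is to show that $H_t$ is strictly positive definite in the sense that its symmetric part is positive definite; invertibility then follows, since $v^\top H_t v>0$ for every $v\neq 0$ rules out $H_t v=0$. We write $H_t=D_t+E_t$, where $D_t$ is block diagonal with one $2\times 2$ block per player and $E_t$ collects the off-diagonal coupling blocks $-M_t^{i,j}$. The $i$-th diagonal block is
\[
D_t^i=\begin{bmatrix} H_t^{u^i,u^i} & H_t^{u^i,d^i}\\ H_t^{d^i,u^i} & H_t^{d^i,d^i}\end{bmatrix}.
\]
Because $H_t^{d^i,u^i}=-(H_t^{u^i,d^i})^\top$, this block is the sum of a symmetric block-diagonal part $\mathrm{diag}(H_t^{u^i,u^i},H_t^{d^i,d^i})$ and a skew-symmetric part. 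The skew-symmetric part contributes nothing to $v^\top H_t v$, so
\[
v^\top D_t v=\sum_i \bigl(\Vert v_{u^i}\Vert^2_{H_t^{u^i,u^i}}+\Vert v_{d^i}\Vert^2_{H_t^{d^i,d^i}}\bigr)\ge \sum_i \mu_i\bigl(\Vert v_{u^i}\Vert^2+\Vert v_{d^i}\Vert^2\bigr),
\]
where $\mu_i$ denotes the minimum on the left-hand side of the assumed condition.

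Next we bound the coupling term. The entries of $E_t$ have the form $-M_t^{i,j}$, placed in row block $d^i$ and column block $u^j$, for $j\neq i$. Each such block gives $|v_{d^i}^\top M_t^{i,j}v_{u^j}|\le \sigma_{\max}(M_t^{i,j})\Vert v_{d^i}\Vert\Vert v_{u^j}\Vert\le \tfrac12\sigma_{\max}(M_t^{i,j})(\Vert v_{d^i}\Vert^2+\Vert v_{u^j}\Vert^2)$ by Young's inequality. Summing over $j\neq i$ gives two contributions. The term $\Vert v_{d^i}\Vert^2$ picks up the weight $\tfrac12\sum_{j\neq i}\sigma_{\max}(M_t^{i,j})$. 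The term $\Vert v_{u^i}\Vert^2$ receives contributions from the blocks $M_t^{j,i}$, with total weight $\tfrac12\sum_{j\neq i}\sigma_{\max}(M_t^{j,i})$. Both weights are bounded by $c_i:=\tfrac12\sum_{j\neq i}\max\{\sigma_{\max}(M_t^{i,j}),\sigma_{\max}(M_t^{j,i})\}$, so $|v^\top E_t v|\le\sum_i c_i(\Vert v_{u^i}\Vert^2+\Vert v_{d^i}\Vert^2)$. Combining this with the bound on $v^\top D_t v$ gives
\[
v^\top H_t v\ge \sum_i(\mu_i-c_i)\bigl(\Vert v_{u^i}\Vert^2+\Vert v_{d^i}\Vert^2\bigr),
\]
which is $>0$ for $v\neq 0$ under the hypothesis $\mu_i>c_i$.

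For the special case $M_t^i=\lambda I$, each sub-block $M_t^{i,j}$ is a coordinate selection of $\lambda I$ (an identity-type block), so its largest singular value is $\lambda$. Hence $c_i=\tfrac{N-1}{2}\lambda$, which yields the stated condition. The step we expect to need the most care is the bookkeeping of where the blocks $-M_t^{i,j}$ sit in $H_t$, namely confirming from the first-order conditions that they couple $d^i$ with $u^j$ only. We will also check that the diagonal block's cross terms are exactly skew, which follows from $H_t^{d^i,u^i}=-(H_t^{u^i,d^i})^\top$ as defined in \cref{thm:sre}. If the coupling turns out to be placed differently, the same Young's-inequality argument applies with the indices relabeled.
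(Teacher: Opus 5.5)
Your proof is correct. It shares the paper's skeleton: show that the symmetric part of $H_t$ is positive definite, using $H_t^{d^i,u^i}=-(H_t^{u^i,d^i})^\top$ to discard the within-player cross blocks, so that only $\mathrm{diag}(H_t^{u^i,u^i},H_t^{d^i,d^i})$ and the $-M_t^{i,j}$ couplings remain (row $d^i$, column $u^j$, which you located correctly).

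The difference is the key step. The paper forms the per-player blocks $U_t^i$ and $V_t^{i,j}$ of $\tfrac12(H_t+H_t^\top)$. It reads off $\Vert V_t^{i,j}\Vert_2=\tfrac12\max\{\sigma_{\max}(M_t^{i,j}),\sigma_{\max}(M_t^{j,i})\}$ from the block anti-diagonal structure and then invokes the Block Gershgorin theorem of Feingold--Varga. You instead bound the quadratic form directly with Cauchy--Schwarz and Young's inequality.

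Your route is self-contained. It makes the coercivity constant explicit, $v^\top H_t v\ge \min_i(\mu_i-c_i)\Vert v\Vert^2$, hence $\Vert H_t^{-1}\Vert_2\le 1/\min_i(\mu_i-c_i)$; Gershgorin yields the same lower bound on the spectrum of the symmetric part, just less visibly. Before you take the maximum, your bookkeeping also exposes a slightly weaker sufficient condition: it is enough that $\lambda_{\min}(H_t^{u^i,u^i})>\tfrac12\sum_{j\neq i}\sigma_{\max}(M_t^{j,i})$ and $\lambda_{\min}(H_t^{d^i,d^i})>\tfrac12\sum_{j\neq i}\sigma_{\max}(M_t^{i,j})$. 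The paper's $2\times 2$-per-player partition forces the min/max pairing because $u^i$ and $d^i$ share a block; Gershgorin on the finer partition into $2N$ blocks would recover your version.

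The paper's argument is shorter because it outsources the estimate to a cited theorem. You also explicitly justify $\sigma_{\max}(M_t^{i,j})=\lambda$ when $M_t^i=\lambda I$, since the block is a scaled column selection of the identity; the paper leaves this step implicit.
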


\section{Proofs}\label{thm:proof}
In this section, we provide the proofs and intuition for our theoretical results.

\subsection{Proof of~\cref{thm:sre}}

We prove the theorem in two steps.
We show that the game is equivalent to a simultaneous game, which simplifies the analysis. 
We then perform backwards induction to compute equilibria of such simultaneous game. 

\subsubsection{Reformulation as a simultaneous game}

We now show that this game is equivalent to a simultaneous game where the cost of each player $i$
\begin{multline*}\label{eq:cost_augmented}
    J_\mathrm{aug}^i(\pi^i,\sigma^i,\pi^{-i}) = \norm{x_T}_{Q_T^i}^2 \\ + \sum_{t=0}^{T-1} \norm{x_t}^2_{Q_t^i} + \norm{u_t^i}^2_{R_t^i}
    - \norm{d_t^{i}}_{S_t^i}^2
    - 
    \Vert d_t^{i}-u_t^{-i}\Vert^2_{M_t^i},
\end{multline*}
subject to the system dynamics and the feedback policies, with its adversary simultaneously maximizing this cost. 
This immediately leads to a revised notion of equilibrium. 

\begin{definition}[equilibrium of the augmented game]
    The policies $(\bar\pi^1,\bar\sigma^{1}, \ldots, \bar\pi^N,\bar\sigma^{N})$ are an equilibrium of the augmented game if for all players $i$ we have 
    \begin{equation}\label{eq:sre:augmented}
        J_\mathrm{aug}^i(\bar \pi^{i}, \sigma^i, \bar\pi^{-i})
        \leq
        J_\mathrm{aug}^i(\bar \pi^{i}, \bar\sigma^i, \bar\pi^{-i})
        \leq
        J_\mathrm{aug}^i(\pi^i, \bar\sigma^i, \bar\pi^{-i})
    \end{equation}
    for all policies $\pi^i\in\Pi$ and $\sigma^i\in\Sigma^i$.
\end{definition}

In this formulation, players and adversaries play \emph{simultaneously}. By a standard argument of linear quadratic dynamic zero-sum games~\cite{bacsar2008h}, these two formulations are equivalent. 

\begin{lemma}[equivalence]
    The policies $(\bar\pi^1,\bar\sigma^{1}, \ldots, \bar\pi^N,\bar\sigma^{N})$ form an  equilibrium of the augmented game 
    if and only if 
    $(\bar\pi^1,\ldots,\bar\pi^N)$ is a strategically robust dynamic equilibrium, and $(\bar\sigma^1,\ldots,\bar\sigma^N)$ are the worst-case policies in~\eqref{eq:robust_cost} (i.e., the best response to $(\bar\pi^1,\ldots,\bar\pi^N)$). 
\end{lemma}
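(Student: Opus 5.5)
Both directions reduce to a saddle-point property of the two-player zero-sum game between player $i$ and its fictitious adversary, with $\pi^{-i}$ held fixed. By definition, $J^i(\pi^i,\pi^{-i})=\max_{\sigma^i\in\Sigma^i}J^i_\mathrm{aug}(\pi^i,\sigma^i,\pi^{-i})$. Therefore~\eqref{eq:sre} reads
\[
\min_{\pi^i}\max_{\sigma^i}J^i_\mathrm{aug}=\max_{\sigma^i}J^i_\mathrm{aug}(\bar\pi^i,\sigma^i,\bar\pi^{-i}).
\]
Meanwhile,~\eqref{eq:sre:augmented} states that $(\bar\pi^i,\bar\sigma^i)$ is a saddle point of $J^i_\mathrm{aug}(\cdot,\cdot,\bar\pi^{-i})$. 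The plan is to show that, for each fixed $\bar\pi^{-i}$, this zero-sum game has a value: $\min\max=\max\min$, with the saddle point characterized by backward Riccati recursion. This is the standard linear quadratic zero-sum result of~\cite{bacsar2008h}.

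\textbf{Step 1 (value of the inner zero-sum game).} Fix $\bar\pi^{-i}$; in the setting of interest these policies are linear Markovian, $u_t^{-i}=K_t^{-i}x_t$. Then $J^i_\mathrm{aug}$ is a quadratic cost of a linear system driven by $u^i$ and $d^i$. The penalty $\|d^i_t-K^{-i}_tx_t\|^2_{M_t^i}$ only adds state and cross terms. I will run dynamic programming with value function $x^\top P^i_t x$, and at each stage complete squares in $(u_t^i,d_t^i)$. The conditions $H_t^{u^i,u^i}\succ0$ and $H_t^{d^i,d^i}\succ0$ make the stage Hamiltonian strictly convex in $u^i_t$ and strictly concave in $d^i_t$. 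A convex-concave quadratic has a unique saddle point, so stagewise $\min\max=\max\min$. Backward induction then shows that the feedback pair obtained from the stage saddle points is a saddle point over all nonanticipative policies, not only linear Markov ones. The induction goes as follows: for any deviation of one side, the completed-square identity gives $J^i_\mathrm{aug}$ equal to $x_0^\top P^i_0x_0$ plus a sum of signed squares. Hence a saddle point exists and the lower and upper values coincide.

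\textbf{Step 2 (directions).} ($\Rightarrow$) Suppose~\eqref{eq:sre:augmented} holds. The left inequality says $\bar\sigma^i$ is a best response, i.e., a worst case in~\eqref{eq:robust_cost}, and gives $J^i(\bar\pi^i,\bar\pi^{-i})=J^i_\mathrm{aug}(\bar\pi^i,\bar\sigma^i,\bar\pi^{-i})$. For any $\pi^i$, the right inequality gives
\[
J^i(\pi^i,\bar\pi^{-i})\ge J^i_\mathrm{aug}(\pi^i,\bar\sigma^i,\bar\pi^{-i})\ge J^i_\mathrm{aug}(\bar\pi^i,\bar\sigma^i,\bar\pi^{-i}),
\]
which is~\eqref{eq:sre}. ($\Leftarrow$) Suppose $\bar\pi$ is a strategically robust dynamic equilibrium and $\bar\sigma^i$ attains the max in~\eqref{eq:robust_cost}. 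The left inequality of~\eqref{eq:sre:augmented} then holds by construction. For the right one, $\bar\pi^i$ is a minimax strategy. By Step 1 the game has a value and a saddle point $(\pi^{i\star},\sigma^{i\star})$. A standard fact then applies: any minimax strategy paired with any maximin strategy forms a saddle point. It remains to show $\bar\sigma^i$ is maximin. This follows from uniqueness: by strict concavity the best response to $\bar\pi^i$ is unique, and because $\bar\pi^i$ is minimax, $\sigma^{i\star}$ is also a best response to it. Hence $\bar\sigma^i=\sigma^{i\star}$ along the realized trajectory. The right inequality follows.

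\textbf{Main obstacle.} The delicate point is Step 1 over general (history-dependent, possibly nonlinear) policies. The issue is that the completed-square argument must hold when the other side deviates arbitrarily. Also, $\bar\pi^{-i}$ might be non-Markov in the general statement. I would first prove the lemma for linear Markov $\bar\pi^{-i}$, which suffices for \cref{thm:sre}. For the general case I would treat $u_t^{-i}$ as an exogenous adapted signal entering the penalty. I expect the same square completion to work, now with an affine value function. Because the best response is only unique along the realized trajectory, I would state equality of the adversary policies in that sense.
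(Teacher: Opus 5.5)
Your $(\Rightarrow)$ direction is correct and needs no minimax theory. Your Step~1 (completing squares to show that, for linear Markov $\bar\pi^{-i}$, the feedback pair $(K^i_tx_t,L^i_tx_t)$ is a saddle point over all nonanticipative policies) is exactly what the paper delegates to~\cite{bacsar2008h}. The gap is in $(\Leftarrow)$, at ``Hence $\bar\sigma^i=\sigma^{i\star}$ along the realized trajectory. The right inequality follows.'' The right inequality of~\eqref{eq:sre:augmented} evaluates $\bar\sigma^i$ against \emph{deviations} of player $i$. Any such deviation moves the state off the realized trajectory, and there being a best response to $\bar\pi^i$ puts no constraint on $\bar\sigma^i$. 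Agreeing with $\sigma^{i\star}$ on path therefore does not make $\bar\sigma^i$ maximin. This cannot be patched, because the ``if'' direction fails as literally stated. Take $\bar\pi^j_t(x_t)=K^j_tx_t$ from \cref{thm:sre}. Let $\bar\sigma^i$ play $L^i_tx_t$ at every on-path history, play $\lambda v$ (with $v\neq0$) at the first off-path history, and revert to $L^i_tx_t$ afterwards. Against $\bar\pi^i$ it reproduces the saddle-point play, so it is a worst-case policy in~\eqref{eq:robust_cost}. Now let player $i$ perturb $u^i_t$ at some $t\le T-2$ so that $x_{t+1}$ leaves the path (possible whenever $B^i_t\neq0$), and play $K^i_sx_s$ afterwards. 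The cost-to-go from $t+1$ is then a quadratic in $\lambda$ with leading coefficient $-v^\top H_{t+1}^{d^i,d^i}v<0$. For large $\lambda$ the deviation yields $J^i_{\mathrm{aug}}<x_0^\top P_0^ix_0=J^i_{\mathrm{aug}}(\bar\pi^i,\bar\sigma^i,\bar\pi^{-i})$, violating the right inequality.

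What does hold, and all that \cref{thm:sre} needs, is the version in which $\bar\sigma^i$ is a \emph{maximin} strategy of the adversary (e.g.\ $\bar\sigma^i_t(x_t)=L^i_tx_t$). In existential form: $\bar\pi$ is a strategically robust dynamic equilibrium iff \emph{some} $\bar\sigma$ makes $(\bar\pi,\bar\sigma)$ an augmented equilibrium. For that version your argument closes without any uniqueness claim. With the value $v$ from Step~1, a minimax $\bar\pi^i$ and a maximin $\bar\sigma^i$ give $J^i_{\mathrm{aug}}(\bar\pi^i,\bar\sigma^i,\bar\pi^{-i})\le\sup_{\sigma^i}J^i_{\mathrm{aug}}(\bar\pi^i,\sigma^i,\bar\pi^{-i})=v=\inf_{\pi^i}J^i_{\mathrm{aug}}(\pi^i,\bar\sigma^i,\bar\pi^{-i})\le J^i_{\mathrm{aug}}(\bar\pi^i,\bar\sigma^i,\bar\pi^{-i})$. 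The paper's one-line citation does not address this point either.

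Two smaller issues remain. First, ``by strict concavity the best response to $\bar\pi^i$ is unique'' presumes $\bar\pi^i$ is linear, whereas in $(\Leftarrow)$ it is only known to be minimax. Second, your plan for non-Markov $\bar\pi^{-i}$ (treat $u^{-i}_t$ as an exogenous signal with an affine value function) fails. The input $u^{-i}_t=\bar\pi^{-i}_t(x_0,\ldots,x_t)$ reacts to the state that player $i$ and the adversary steer. So for nonlinear $\bar\pi^{-i}$ the inner game is not linear quadratic and need not have a value.
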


\begin{proof}
    For fixed $\bar\pi^{-i}$, the theory of dynamic zero-sum games~\cite{bacsar2008h} shows that~\eqref{eq:sre} and~\eqref{eq:sre:augmented} are equivalent.
\end{proof}

\subsubsection{Backwards induction}

We compute equilibria of the simultaneous game using backward induction. 
However, distinct players solve Bellman equations with distinct dynamics, and, thus, we are not operating in the standard setting of dynamic games. 
Thus, we first conduct the backwards induction and then prove that the obtained solution forms an equilibrium, and this equilibrium is unique. 

\paragraph*{Step 1}
We compute a candidate equilibrium using backwards induction
\begin{multline}\label{eq:proof:backward_dp}
    \!\!\!\!
    V_t^i(x) = \min_{u_t^i}\max_{d_t^i} 
    \norm{x_t}^2_{Q_t^i} + \norm{u_t^i}^2_{R_t^i} - \norm{d_t^i}^2_{S_t^i}
    -\norm{d_t^i-u_t^{-i}}_{M_t^i}^2
    \\
    +
    V_{t+1}^i\left(A_tx_t + B_t^iu_t^i + B_t^{-i} d_t^i\right).
\end{multline}
We claim that the value functions are $V_t^i(x_t)=x_t^\top P_t^i x_t$, with $P_t$ as in~\eqref{eq:sre_riccati}.
We prove this via induction. The base case is trivial. Suppose $V_{t+1}^i=x_t^\top P_{t+1}^ix_t$. Then, at stage $t$, backwards induction~\eqref{eq:proof:backward_dp} gives
\begin{align*}
    0
    &=
    R_t^iu_t^i + (B_t^i)^\top P_{t+1}^i\left(A_tx_t + B_t^iu_t^i + B_t^{-i} d_t^i\right)
    \\
    0
    &=
    -S^i_td_t^i - M_t^i(d^i_t-u_t^{-i}) 
    \\
    &\qquad\qquad 
    + (B_t^{-i})^\top P^i_{t+1}\left(A_tx_t + B_t^iu_t^i + B_t^{-i} d_t^i\right),
\end{align*} 
where we used the assumptions
$R_{t}^i+(B_t^i)^\top P_{t+1}^iB_t^i\succ 0$ and 
$S_t^i+M_t^i-(B_t^{-i})^\top P^i_{t+1}B_t^{-i}\succ 0$
ensure strong convexity and concavity (and thus that min and max can be interchanged, and lower and upper cost-to-go coincide).
Stacking these conditions for all players gives the necessary and sufficient conditions for an equilibrium.
Since all conditions are linear in $(u_t^i,d_t^i,x_t)$, the saddle point $u_t^i$ and $d_t^i$ are a linear function of $x_t$, and $V^i_t(x_t)=x_t^\top P^i_t x_t$ is quadratic. 
This yields~\eqref{eq:sre_riccati} and~\eqref{eq:sre_policies}, and proves the induction. 
We call this candidate equilibrium $(\bar\pi^1,\bar\sigma^1,\ldots,\bar\pi^N,\bar\sigma^N)$. 

\paragraph*{Step 2}
We now show that the candidate equilibrium $(\bar\pi^1,\bar\sigma^1,\ldots,\bar\pi^N,\bar\sigma^N)$ from Step~1 satisfies the saddle-point condition~\eqref{eq:sre:augmented}; note that this step would be more demanding in the sequential formulation of the game. 

For the right inequality (optimality of $\bar\pi^i$), fix $d_t^i = L_t^i x_t$ and $u_t^j = K_t^j x_t$ for all $j\neq i$.
Player~$i$ then faces a standard LQR with dynamics
$
    x_{t+1} = \tilde A_t^i x_t + B_t^i u_t^i
$
where 
$
    \tilde A_t^i \coloneqq A_t + B_t^{-i} L_t^i,
$
and cost matrices $\tilde R_t^i = R_t^i$, $\tilde Q_T^i = Q_T^i$, and
$
    \tilde Q_t^i 
    = Q_t^i 
    - (L_t^i)^\top S_t^i L_t^i 
    - (L_t^i - K_t^{-i})^\top M_t^i (L_t^i - K_t^{-i}).
$
The LQR gain for this reduced problem is 
$
    \tilde K_t^i 
    = 
    -\bigl(R_t^i + (B_t^i)^\top \tilde P_{t+1}^i B_t^i\bigr)^{-1} (B_t^i)^\top \tilde P_{t+1}^i \tilde A_t^i.
$
We claim $\tilde K_t^i = K_t^i$ and $\tilde P_t^i = P_t^i$ for all $t$, which we verify by backward induction.
The base case $\tilde P_T^i = Q_T^i = P_T^i$ is immediate.
Suppose $\tilde P_{t+1}^i = P_{t+1}^i$. 
The $u^i$-row of the first-order conditions in Step~1 reads
$H_t^{u^i,u^i} K_t^i+H_t^{u^i,d^i}L_t^i =G_t^{u^i}$, which, by direct inspection, 
gives
$
    K_t^i = \tilde K_t^i.
$
Since the closed-loop matrices coincide, $\tilde A_t^i + B_t^i \tilde K_t^i = A_t + B_t^i K_t^i + B_t^{-i} L_t^i$, the LQR Riccati propagates $\tilde P_t^i = P_t^i$.
Moreover, the reduced LQR is well-posed at every stage because $R_t^i + (B_t^i)^\top P_{t+1}^i B_t^i = H_t^{u^i,u^i} \succ 0$ by assumption.
Thus, $\bar\pi^i$ is the unique optimal policy, confirming the right inequality in~\eqref{eq:sre:augmented}.

The proof of the left inequality (optimality of $\bar\sigma^i$) is analogous and omitted for brevity. 

\paragraph*{Step 3}
For uniqueness, assume, for the sake of contradiction, that there is an alternative equilibrium, and let $t'$ be the first time that at least one policy differs from $(\bar\pi^1,\bar\sigma^1,\ldots, \bar\pi^N,\bar\sigma^N)$ defined above. 
Since all policies agree for $t>t'$, the continuation values at $t'+1$ coincide (and are quadratic), and so the matrices $H_{t'}$ and $G_{t'}$ are identical to those in Step~1.
The strict convexity in $u_{t'}^i$ (from $H_{t'}^{u^i,u^i}\succ 0$) and strict concavity in $d_{t'}^i$ (from $H_{t'}^{d^i,d^i}\succ 0$) guarantee that the first-order conditions are necessary and sufficient for each player's and adversary's best response, for every state~$x_{t'}$.
Stacking these conditions yields a unique solution, namely that in the statement of \cref{thm:sre}, so the equilibrium policies at $t'$ must be linear and coincide with the candidate from Step~1. This is a contradiction.

\subsection{Proof of Proposition~\ref{prop:invertibility}}

We study the symmetric part of $H_t$ given by 
\begin{equation*}
\frac{1}{2}(H_t + H_t^\top) = \begin{bmatrix} 
U_t^1 & \dots & V_t^{1,N} \\ 
\vdots & \ddots & \vdots \\ 
V_t^{N,1} & \dots & U_t^N 
\end{bmatrix}
\end{equation*}
with $U_t^i \!=\!\text{diag}(H_t^{u^i,u^i}, H_t^{d^i,d^i})$ and 
\begin{align*}
V_t^{i,j} &= \begin{bmatrix} 0 & -\frac{1}{2} ({M_t^{j,i}})^\top \\ -\frac{1}{2}({M_t^{i,j}}) & 0 \end{bmatrix}, \quad j \neq i.
\end{align*}
By construction, the spectral norm of the coupling blocks is $\|V_t^{i,j}\|_2 = \frac{1}{2}\max\{\sigma_{\max}({M_t^{i,j}}), \sigma_{\max}({M_t^{j,i}})\}$. By definition of H$_t^{\bullet}$, the minimum eigenvalue of the diagonal blocks is $\lambda_{\min}(U_t^i) = \min\{\lambda_{\min}(R_t^i + B_t^{i\top} P_{t + 1}^i B_t^i), \lambda_{\min}(S_t^i + M_t^i - B_t^{-i\top} P_{t + 1}^i B_t^{-i})\}$. 
Invoking the Block Gershgorin Theorem \cite{feingold1962block}, $\frac{1}{2}(H_t + H_t^\top)$ is strictly positive definite if the block diagonal dominance 
$
\lambda_{\min}(U_t^i) > \sum_{j \neq i} \|V_t^{i,j}\|_2
$
holds for all $i \in \mathcal{N}$.
This is our assumption. Thus, $H_t$ is invertible.

\section{Experiments}

We now illustrate the effects and benefits of strategic robustness in a collaborative game and in a network game. 

\subsection{One-dimensional collaborative game}

Consider two agents who collaborate to steer the state of a scalar system from $x_0=1$ to the target state $x_T=0$, with, for simplicity, $T=3$.
We consider the integrator dynamics $x_{t+1} = x_t + u_t^1 + u_t^2$ and the control cost $x_3^2 + \sum_{t=0}^{T-1} (u_t^i)^2$.
Across all settings, we use time-invariant robustness parameters for both players; i.e., $M_t^i=M^i>0$.

\begin{table}[h!]
    \centering
    \caption{Cost statistics comparing Nash (NE) and strategically robust dynamic equilibria (SR) with $M^1=1.2$.}
    \begin{tabular}{c|ccccc}
        \toprule
         Percentile   & 5\% & 25\% & 50\% & 75\% & 95\% \\
        \midrule 
         NE & 0.04 & 0.32 & 1.35 & 3.82 & 10.80 \\
         SR & 0.32 & 0.52 & 1.26 & 3.01 & 8.12 \\
         \bottomrule
    \end{tabular}
    \label{tab:cost_statistics}
\end{table}

\subsubsection{Robustness to adversarial perturbations}

To test the robustness of the derived policies, we assess the performance of player 1 against informed adversaries and compare the strategically robust policy for various robustness levels $M^1$ (i.e., player 1 is robust and player 2 is not) against the Nash equilibrium policy ($M^i\to\infty$ for both players). 
We model this adversarial behavior as follows: We define a ``rogue'' strategy for player 2, $u^{2,\text{adv}}_t$, which deviates from the Nash equilibrium policy toward player 1's adversary as follows: 
\begin{equation*}
    \pi^{2,\text{adv}}_t(x_t)
    =
    \pi^{2,\text{NE}}_t(x_t) - \text{clip}(\pi^{2,\text{NE}}_t(x_t) - d^{1,2}_{\text{robust}, t}(x_t), -c,c),
\end{equation*}
where $\pi^{2,\text{NE}}_t$ is the Nash equilibrium policy and $d^{1,2}_{\text{robust}, t}$ is the adversarial policy of player 1, which we obtain when solving for the strategically robust dynamic equilibrium.
The parameter $c>0$ denotes the perturbation budget, representing the maximum magnitude of deviation allowed, over which we ablate in~\cref{fig:adversarial_costs}.
In the absence of deviations, the Nash equilibrium policy (i.e., $M^1\to\infty$) performs best, but the performance quickly deteriorates when the policy of player 2 becomes more adversarial.
Conversely, strategically robust policies are slightly suboptimal in absence of perturbations, but do not suffer as much when the other player deviates. 
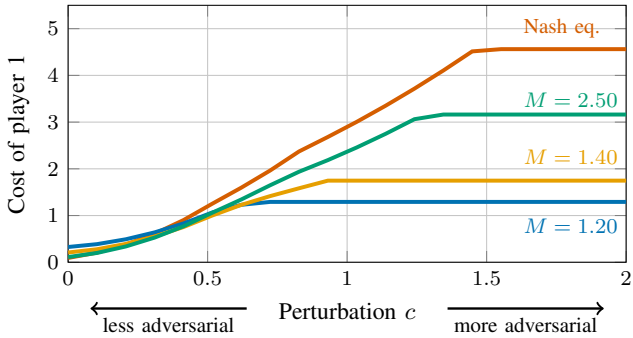
\begin{figure}[!t]
    \centering

\definecolor{cbBlue}{HTML}{0072B2}
\definecolor{cbOrange}{HTML}{E69F00}
\definecolor{cbGreen}{HTML}{009E73}
\definecolor{cbVermillion}{HTML}{D55E00}

\begin{tikzpicture}
\begin{axis}[
    width=9cm,
    height=5cm,
    xlabel={Perturbation $c$},
    ylabel={Cost of player 1},
    xmin=0, xmax=2,
    ymin=0, ymax=5.5,
    xtick={0,0.5,1,1.5,2},
    ytick={0,1,2,3,4,5},
    clip=false,
    font=\small,
    grid=major,
    grid style={gray!45},
    tick label style={font=\footnotesize},
    label style={font=\small},
]

\addplot[
    color=cbVermillion,
    line width=1.5pt
] table [col sep=comma, x=clip, y={M=100.000_cost}, restrict x to domain*=0:2] {results/exp2_clipped.csv};

\addplot[
    color=cbBlue,
    line width=1.5pt
] table [col sep=comma, x=clip, y={M=1.200_cost}, restrict x to domain*=0:2] {results/exp2_clipped.csv};

\addplot[
    color=cbOrange,
    line width=1.5pt
] table [col sep=comma, x=clip, y={M=1.400_cost}, restrict x to domain*=0:2] {results/exp2_clipped.csv};

\addplot[
    color=cbGreen,
    line width=1.5pt
] table [col sep=comma, x=clip, y={M=2.500_cost}, restrict x to domain*=0:2] {results/exp2_clipped.csv};

\node[anchor=south west, font=\footnotesize, text=cbVermillion] at (axis cs:1.6,4.60) {Nash eq.};
\node[anchor=south west, font=\footnotesize, text=cbGreen] at (axis cs:1.60,3.12) {$M=2.50$};
\node[anchor=south west, font=\footnotesize, text=cbOrange] at (axis cs:1.60,1.9) {$M=1.40$};
\node[anchor=north west, font=\footnotesize, text=cbBlue] at (axis cs:1.60,1.14) {$M=1.20$};

\draw[->, line width=0.9pt]
    (axis description cs:0.32,-0.18) -- (axis description cs:0.04,-0.18);
\node[anchor=north, font=\footnotesize]
    at (axis description cs:0.18,-0.18) {less adversarial};

\draw[->, line width=0.9pt]
    (axis description cs:0.68,-0.18) -- (axis description cs:0.96,-0.18);
\node[anchor=north, font=\footnotesize]
    at (axis description cs:0.82,-0.18) {more adversarial};

\end{axis}
\end{tikzpicture}
    \caption{Realized cost $J^1$ experienced by player 1 under varying adversarial perturbation budgets $c$. Strategically robust policies demonstrate better robustness compared to the Nash equilibrium policy, at the price of modest suboptimality when the perturbation is small.}
    \label{fig:adversarial_costs}
    \vspace{-2ex}
\end{figure}

\subsubsection{Robustness to random perturbations}
We now investigate the performance of strategically robust feedback policies against randomly perturbed policies. Specifically, we suppose the feedback policy of player 2 is corrupted by a constant, randomly sampled drift; i.e., 
$
    \pi_t^{2, \text{bias}}(\cdot) = \pi_t^{2, \text{NE}}(\cdot) + b
$
with $b \sim \mathcal{N}(0, 1)$.
Through Monte Carlo simulations with $10^5$ samples, we compare in~\cref{tab:cost_statistics} the cost distributions of the Nash equilibrium policies and the strategically robust policies when player 1 uses $M^1=1.2$ (and $M^2\to+\infty$).
Strategic robustness significantly improves the tail of the cost distribution. We observe a 25\% cost reduction in the 95th percentile, 21\% cost reduction in the 75th percentile, and 7\% improvement in the median, with almost no change in the lower percentiles.

\subsubsection{Robustness induces collaboration}
In our last experiment, we investigate the effect of strategic robustness on the social cost, defined as the sum of all agents' costs. 
We do so in two settings, one where we use the same level of robustness for both players $M^i=M>0$ and one where we use $M^1=M$ and set $M^2\to\infty$ (i.e., player 2 is not robust), and report results in \cref{fig:collaborative:freelunch}.
Remarkably, we observe that social cost slightly decreases as agents become more strategically robust---a ``free-lunch'' effect already observed in static games in~\cite{lanzetti2025strategically} (there referred to as ``coordination-via robustification'') and used in~\cite{qu2026training} for training collaborative AI agents in multi-agent reinforcement learning. 
This effect is purely game-theoretic and contrasts the intuition on the effect of robustness in optimization. Indeed, while in robust optimization the decision-maker regrets taking robust decisions if the nominal scenario realizes, strategic robustness in games can lead to benefits for all players and better social cost---even in the absence of perturbations from the equilibrium.
In this example, we can explain this phenomenon as follows. To protect against misspecified policies of other agents (i.e., the others exerting less control effort), strategic robustness induces players to exert a larger control effort,  which steers the system closer to the target state and improves social cost. 

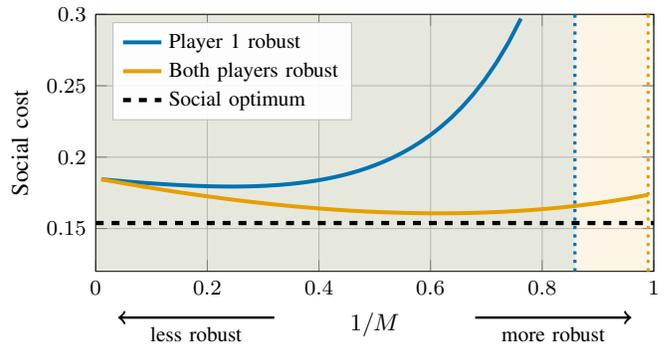
\begin{figure}[!t]
    \centering

\definecolor{cbBlue}{HTML}{0072B2}
\definecolor{cbOrange}{HTML}{E69F00}
\definecolor{cbGreen}{HTML}{009E73}
\definecolor{cbVermillion}{HTML}{D55E00}

\def\data{results/cost_vs_M_2p_asymmetric.csv}
\def\datasym{results/cost_vs_M_2p_symmetric.csv}
\def\xfeasblue{0.858738}
\def\xfeasorange{0.990099}
\def\xstopblue{0.761982}
\def\xstoporange{0.990099}

\begin{tikzpicture}
\begin{axis}[
    width=9cm,
    height=5cm,
    xlabel={$1/M$},
    ylabel={Social cost},
    xmin=0.0, 
    xmax=1.0,
    ymax=0.3,
    ymin=0.12, 
    clip=false,
    grid=major,
    grid style={gray!45},
    legend style={at={(0.03,0.97)}, anchor=north west, font=\footnotesize, draw=gray!50, fill=white, fill opacity=0.92, text opacity=1},
    legend cell align={left},
    legend image post style={xscale=0.85},
    tick label style={font=\footnotesize},
    label style={font=\small},
]

\path[fill=cbBlue, fill opacity=0.10, draw=none]
    (axis cs:0,\pgfkeysvalueof{/pgfplots/ymin}) rectangle
    (axis cs:\xfeasblue,\pgfkeysvalueof{/pgfplots/ymax});
\path[fill=cbOrange, fill opacity=0.10, draw=none]
    (axis cs:0,\pgfkeysvalueof{/pgfplots/ymin}) rectangle
    (axis cs:\xfeasorange,\pgfkeysvalueof{/pgfplots/ymax});

\addplot[color=cbBlue, dotted, line width=1.2pt, forget plot]
    coordinates {(\xfeasblue,\pgfkeysvalueof{/pgfplots/ymin}) (\xfeasblue,\pgfkeysvalueof{/pgfplots/ymax})};
\addplot[color=cbOrange, dotted, line width=1.2pt, forget plot]
    coordinates {(\xfeasorange,\pgfkeysvalueof{/pgfplots/ymin}) (\xfeasorange,\pgfkeysvalueof{/pgfplots/ymax})};

\addplot[color=cbBlue, line width=1.5pt]
    table [col sep=comma, x={1/M}, y=TotalCost, restrict expr to domain={\thisrow{1/M}}{0:\xstopblue}] {\data};
\addlegendentry{Player 1 robust}

\addplot[color=cbOrange, line width=1.5pt]
    table [col sep=comma, x={1/M}, y=TotalCost, restrict expr to domain={\thisrow{1/M}}{0:\xstoporange}] {\datasym};
\addlegendentry{Both players robust}

\addplot[black, dashed, line width=1.6pt]
    coordinates {(\pgfkeysvalueof{/pgfplots/xmin},0.153846) (\pgfkeysvalueof{/pgfplots/xmax},0.153846)};
\addlegendentry{Social optimum}


\draw[->, line width=0.9pt]
    (axis description cs:0.32,-0.18) -- (axis description cs:0.04,-0.18);
\node[anchor=north, font=\footnotesize]
    at (axis description cs:0.18,-0.18) {less robust};

\draw[->, line width=0.9pt]
    (axis description cs:0.68,-0.18) -- (axis description cs:0.96,-0.18);
\node[anchor=north, font=\footnotesize]
    at (axis description cs:0.82,-0.18) {more robust};

\end{axis}
\end{tikzpicture}
    \vspace{-1.7\baselineskip}
    \caption{Social cost, defined as the sum of the agents' realized costs, as a function of the level of robustness $M$ when only player 1 is strategically robust (blue) and all players are strategically robust with the same $M$ (yellow). The shaded area is the range of $M$ for which the strategically robust equilibrium exists, before cost diverges to $+\infty$ as the adversary carries too much power.}
    \label{fig:collaborative:freelunch}
    \vspace{-3.5ex}
\end{figure}

\subsection{Network consensus and control}

In our second experiment, we consider a multi-agent control problem on a graph, as proposed in \cite{pasqualetti2014controllability}.
Consider a network of $N=5$ nodes arranged in a star configuration as shown in~\cref{fig:network:star}.
The system state of node $i$ evolves according to the dynamics 
$
    x_{t+1}^i
    = 
    \frac{2}{3} x_t^i
    +
    \frac{1}{3\abs{\mathcal N^i}} \sum_{j\in \mathcal N^i}x_t^j
    +
    u_t^i, 
$
where $\mathcal N^i$ is the neighbors of node $i$ and the initial condition is $x_0^i=0$.
Each agent $i$ aims to minimize control effort while reaching the target state $x_\mathrm{target}=5$, so that 
$
    J^i(\pi^i,\pi^{-i}) = \sum_{t=0}^{T-1} (u_t^i)^2 + (x^i_{T} - 5)^2, 
$
over a horizon of $T=20$. While our formulation does not include an affine term in the cost, it can be modeled via a standard state augmentation. 

\subsubsection{Central node resilience under leaf perturbations}

We examine the system's resilience by comparing the Nash equilibrium feedback policy ($M^i \to \infty$ for all $i$) and a strategically robust configuration where the central node (node 1) is strategically robust (with $M^1= 0.16$).
To assess resilience, we consider the adversarial policy
\begin{equation}\label{eq:network:adversary}
    \pi^{\mathrm{leaf} j,\text{adv}}_t(x_t)
    =
    \begin{cases}
        d^{\mathrm{center},\mathrm{leaf} j}_{t}(x_t) & j\in\{1,2\},
        \\
        \pi_t^{\mathrm{NE},\mathrm{leaf} j}(x_t) & j\notin\{1,2\},
    \end{cases}
\end{equation}
where $d^{\mathrm{center},\mathrm{leaf} j}_{t}$ is the policy of the adversary of the center node. 
We evaluate the performance of the central node across four scenarios:
(i) Nash equilibrium, 
(ii) strategically robust dynamic equilibrium, 
(iii) Nash equilibrium center and adversarial leaves 1 and 2 as in~\eqref{eq:network:adversary}, 
(iv) strategically robust center node and adversarial leaves 1 and 2 as in~\eqref{eq:network:adversary}.
We measure the terminal state $x^1_T$ and the total realized cost $J^1$. As shown in~\cref{tab:results} and \cref{fig:star_trajectories}, the strategically robust feedback policy demonstrates better resilience, with a state trajectory that suffers much less from the adversary. 

\begin{table}[!t]
\centering
\begin{minipage}{0.30\columnwidth}
\centering
\vspace{-0.25\baselineskip}
\begin{tikzpicture}[
    scale=0.85,
    every node/.style={circle, draw, fill=gray!15, minimum size=6.5pt, inner sep=1.8pt},
    every path/.style={line width=1.1pt}
]
    \node (c) at (0,0) {};
    \node (n2) at (90:1.35) {};
    \node (n3) at (0:1.35) {};
    \node (n4) at (-90:1.35) {};
    \node (n5) at (180:1.35) {};
    \draw (c) -- (n2);
    \draw (c) -- (n3);
    \draw (c) -- (n4);
    \draw (c) -- (n5);
\end{tikzpicture}

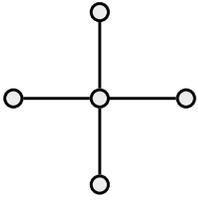
\captionof{figure}{Star graph with $N=5$ nodes.}\label{fig:network:star}
\vspace{-0.8\baselineskip}
\end{minipage}
\hfill 
\begin{minipage}{0.66\columnwidth}
\centering
\caption{Central Node Performance Under Adversarial Perturbation}
\vspace{-2ex}
\label{tab:results}
\begin{tabular}{lcc}
\toprule
Scenario & \shortstack{Terminal state\\$x_1(T)$} & \shortstack{Central node\\ realized cost} \\
\midrule
NE & 4.32 & 2.39 \\
SR & 4.77 & 6.23 \\
NE (adv.) & 2.24 & 26.00 \\
SR (adv.) & 3.92 & 14.81 \\
\bottomrule
\end{tabular}
\end{minipage}
\end{table}

\begin{figure}[!t]
    \centering

\definecolor{cbBlue}{HTML}{0072B2}
\definecolor{cbOrange}{HTML}{E69F00}
\definecolor{cbGreen}{HTML}{009E73}
\definecolor{cbVermillion}{HTML}{D55E00}

\begin{tikzpicture}
\begin{axis}[
    width=9cm,
    height=5cm,
    xlabel={Time},
    ylabel={State (central node)},
    ylabel style={yshift=-6pt},
    xmin=0, xmax=20,
    ymin=-3.6, ymax=6,
    xtick={0,5,10,15,20},
    grid=major,
    grid style={gray!45},
    legend columns=-1,
    legend style={at={(0.5,1.04)}, anchor=south, font=\footnotesize, draw=gray!60, fill=white},
    legend image post style={xscale=0.85},
    legend cell align={left},
    tick label style={font=\footnotesize},
    label style={font=\small},
]

\addplot[black, dashed, line width=1.6pt] coordinates {(0,5) (20,5)};
\addlegendentry{Target}
\addplot[color=cbGreen, line width=1.5pt] table [col sep=comma, x=time, y={A: All Neutral}] {results/centralized_center_state.csv};
\addlegendentry{NE}
\addplot[color=cbVermillion, line width=1.5pt] table [col sep=comma, x=time, y={B: Robust Center}] {results/centralized_center_state.csv};
\addlegendentry{SR}
\addplot[color=cbBlue, line width=1.5pt] table [col sep=comma, x=time, y={C: Neutral + Disc}] {results/centralized_center_state.csv};
\addlegendentry{NE (adv.)}
\addplot[color=cbOrange, line width=1.5pt] table [col sep=comma, x=time, y={D: Robust + Disc}] {results/centralized_center_state.csv};
\addlegendentry{SR (adv.)}
\end{axis}
\end{tikzpicture}
    \vspace{-1.7\baselineskip}
    \caption{State of the central node at the Nash equilibrium (NE), the strategically robust dynamic equilibrium (SR), and in the presence of adversarial perturbations of the leaf nodes. The performance of the Nash equilibrium policy significantly deteriorates with adversarial perturbations, whereas the strategically robust policy continues to perform well. 
    }
    \label{fig:star_trajectories}
    \vspace{-3.5ex}
\end{figure}
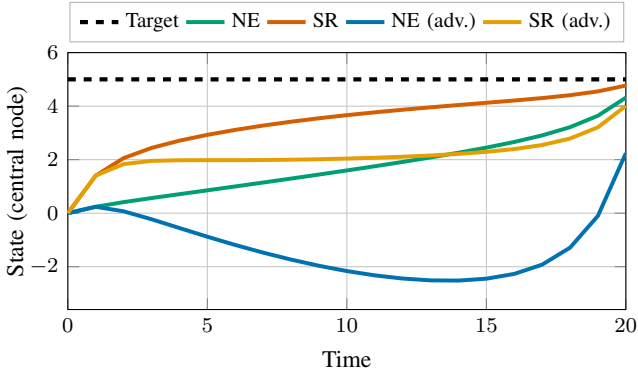

\subsubsection{Robustness induces coordination}
We now investigate the impact of strategic robustness on social cost, defined as the sum of the realized costs of all players. 
We again consider two settings, one in which Player 1 is the only strategically robust agent (i.e., $M^1=M$ and $M^i\to\infty$ for $i\neq 1$) and one in which all players have the same robustness levels.
As we show in~\cref{fig:network:free-lunch}, we observe also here a ``free-lunch'' effect whereby social cost decreases for some level of robustness. 
\begin{figure}[!t]
    \centering

\definecolor{cbBlue}{HTML}{0072B2}
\definecolor{cbOrange}{HTML}{E69F00}
\definecolor{cbGreen}{HTML}{009E73}
\definecolor{cbVermillion}{HTML}{D55E00}

\def\data{results/cost_vs_M_cent_asymmetric.csv}
\def\datasym{results/cost_vs_M_cent_symmetric.csv}
\def\xfeasblue{7.142857}
\def\xfeasorange{1.708277}
\def\xstopblue{8.386554}
\def\xstoporange{1.700331}

\begin{tikzpicture}
\begin{axis}[
    width=9cm,
    height=5cm,
    xlabel={$1/M$},
    ylabel={Social cost},
    xmin=0.0,
    xmax=8,
    ymin=3.8,
    ymax=9,
    clip mode=individual, 
    grid=major,
    grid style={gray!45},
    legend style={at={(0.03,0.03)}, anchor=south west, font=\footnotesize, draw=gray!50, fill=white, fill opacity=0.92, text opacity=1},
    legend cell align={left},
    legend image post style={xscale=0.85},
    tick label style={font=\footnotesize},
    label style={font=\small},
]

\path[fill=cbBlue, fill opacity=0.10, draw=none]
    (axis cs:0,\pgfkeysvalueof{/pgfplots/ymin}) rectangle
    (axis cs:\xfeasblue,\pgfkeysvalueof{/pgfplots/ymax});
\path[fill=cbOrange, fill opacity=0.10, draw=none]
    (axis cs:0,\pgfkeysvalueof{/pgfplots/ymin}) rectangle
    (axis cs:\xfeasorange,\pgfkeysvalueof{/pgfplots/ymax});

\addplot[color=cbBlue, dotted, line width=1.2pt, forget plot]
    coordinates {(\xfeasblue,\pgfkeysvalueof{/pgfplots/ymin}) (\xfeasblue,\pgfkeysvalueof{/pgfplots/ymax})};
\addplot[color=cbOrange, dotted, line width=1.2pt, forget plot]
    coordinates {(\xfeasorange,\pgfkeysvalueof{/pgfplots/ymin}) (\xfeasorange,\pgfkeysvalueof{/pgfplots/ymax})};

\addplot[color=cbBlue, line width=1.5pt]
    table [col sep=comma, x={1/M}, y=TotalCost, restrict expr to domain={\thisrow{1/M}}{0:\xstopblue}] {\data};
\addlegendentry{Player 1 robust}

\addplot[color=cbOrange, line width=1.5pt]
    table [col sep=comma, x={1/M}, y=TotalCost, restrict expr to domain={\thisrow{1/M}}{0:\xstoporange}] {\datasym};
\addlegendentry{Both players robust}

\addplot[black, dashed, line width=1.6pt]
    coordinates {(\pgfkeysvalueof{/pgfplots/xmin},4.011799) (\pgfkeysvalueof{/pgfplots/xmax},4.011799)};
\addlegendentry{Social optimum}

\draw[->, line width=0.9pt]
    (axis description cs:0.32,-0.18) -- (axis description cs:0.04,-0.18);
\node[anchor=north, font=\footnotesize]
    at (axis description cs:0.18,-0.18) {less robust};

\draw[->, line width=0.9pt]
    (axis description cs:0.68,-0.18) -- (axis description cs:0.96,-0.18);
\node[anchor=north, font=\footnotesize]
    at (axis description cs:0.82,-0.18) {more robust};

\end{axis}
\end{tikzpicture}
    \vspace{-1.7\baselineskip}
    \caption{Social cost, defined as the sum of the agents' costs, as a function of the level of robustness $M$ for case when only player 1 is strategically robust (blue) and all players are strategically robust with the same $M$ (yellow). The shaded area is the range of $M$ for which the strategically robust equilibrium exists, before cost diverges to $+\infty$ as the adversary carries too much power.
    }
    \label{fig:network:free-lunch}
    \vspace{-3.5ex}
\end{figure}
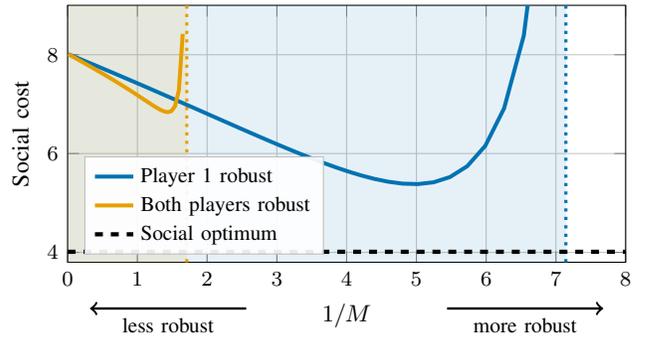




\bibliographystyle{IEEEtran}
\bibliography{references}

\end{document}